\newtheorem{theorem}{Theorem}[section]
\newtheorem{lemma}{Lemma}[section]
\newtheorem{corollary}{Corollary}[section]
\newtheorem{remark}{Remark}[section]
\newtheorem{definition}{Definition}[section]
\begin{document}

\title{\textbf{Three series theorem for independent random variables under sub-linear expectations with applications}}
\date{}

\author{\textbf{Jiapan Xu}\ \ \ \ \ \ \ \textbf{Lixin Zhang}\\
{\small \emph{School of Mathematical Sciences, Zhejiang University, Hangzhou 310027, P. R. China}}\\
\vspace{3mm}
\emph{E-mail: statxjp@zju.edu.cn}\ \ \ \ \emph{stazlx@zju.edu.cn}}

\maketitle
\noindent{\bf{Abstract}}\ \ In this paper, motived by the notion of independent and identically distributed random variables under the sub-linear expectation initiated by Peng, we give a theorem about the convergence of a random series and establish a three series theorem of independent random variables under the sub-linear
expectations. As an application, we obtain the Marcinkiewicz's strong law of large numbers for independent and identically distributed random variables under the sub-linear expectations. The technical details are different from those for classical theorems because the sub-linear expectation and its related capacity are not additive.

\noindent{\bf{Keywords}}\ \ sub-linear expectation, capacity, Rosenthal's inequality, Kolmogorov's three series theorem, Marcinkiewicz's strong law of large numbers

\noindent{\bf{MR(2010) Subject Classification}}\ \ 60F15

\section{Introduction}

Classical limit theorems like the Kolmogorov's strong law of large numbers (SLLN for short) play an important role in the development of probability theory and its applications. The key in the proofs of these limit theorems is the additivity of probabilities and expectations. However, such additivity assumption may not be reasonable in many areas of applications since many uncertain phenomena can not be well modeled using additive probabilities or expectations. Therefore, there are some literatures about non-additive probabilities and non-additive expectations
which are useful tools for studying uncertainties in statistics, measures of risk, superhedging in finance and non-linear stochastic calculus, see \cite{Denis2006A,Gilboa1987Expected,Marinacci1999Limit,Peng S1997,peng1999monotonic,PengG-Expectation06,peng2008multi-dimensional},etc.

This paper considers the general sub-linear expectations and related non-additive probabilities generated by them. The notion of i.i.d. random variables under the sub-linear expectations was introduced by Peng  \cite{PengG-Expectation06,peng2008a,peng2009survey} and the weak convergences such as central limit theorems and weak laws of large numbers has been studied. Recently, Chen \cite{chen2010strong} obtained Kolmogorov's SLLN for independent and identically distributed (i.i.d. for short) random variables under the condition of finite $(1+\epsilon)$-moments by establishing an inequality of an exponential moment of partial sums of truncated independent random variables. The moment condition is much stronger than the one in the classical Kolmogorov's SLLN. Zhang \cite{Zhang Rosenthal} showed that Kolmogorov's SLLN holds for i.i.d. random variables under a continuous sub-linear expectation if and only if the corresponding Choquet integral is finite by using the method of picking up subsequence. Because the classical proofs of SLLN depend basically on the Kolmogorov's three series theorem, such theorems about the convergence of random variables have not been established under the sub-linear expectations. The main purpose of this paper is to establish three series theorem for independent random variables in the general sub-linear expectation spaces and use it to study the strong limit theorems. The Rosenthal's inequalities for the maximum partial sums of independent random variables under the sub-linear expectations established by Zhang \cite{Zhang Rosenthal} are our main tools to prove our results. Because the  Rosenthal's inequalities are established for random variables with zero sub-linear expectations or sub-linear  expectations near to zero and it is impossible to centralize a random variable such that both its upper expectation and lower expectation are zeros, the proof of the three series theorem is much technical.  After having the Kolmogorov's three series theorem, the idea of the proof of the Marcinkiewicz's strong law of large numbers is standard, but the  technical details are different from those for  classical Marcinkiewicz's strong law of large numbers because the sub-linear expectation and its related capacity are not additive.

The rest of the paper is organized as follows. In Section 2, we introduce the
basic notations of sub-linear expectations and capacity. We also explain the concept of identical distribution and independence in a sub-linear expectation space. In Section 3, we establish the three series theorem based on  some basic but important inequalities and lemmas. We apply it to prove the Marcinkiewicz's strong law of large numbers under sub-linear expectation. All the proofs are given in this section. Section 4 concludes with a discussion.

\section{Basic Settings}

We use the framework and notation of Peng \cite{peng2008a}. Let $(\Omega, \mathcal{F})$ be a given measurable space and let $\mathscr{H}$ be a linear space of real functions defined on $(\Omega, \mathcal{F})$ such that if
$X_1, X_2, \ldots, X_n \in \mathscr{H}$ then $\varphi(X_1, X_2, \ldots, X_n)\in \mathscr{H}$ for
each $\varphi \in C_{l,Lip}(\mathds{R_n})$, where $C_{l,Lip}(\mathds{R_n})$ denotes the linear space of (local Lipschitz) functions $\varphi$ satisfying
\begin{align*}
&|\varphi(\textbf{x})-\varphi(\textbf{y})|\leq C\left(|1+|\textbf{x}|^m+|\textbf{y}|^m\right)|\textbf{x}-\textbf{y}|,\ \ \forall\  \textbf{x},\textbf{y}\in \mathds{R_n},\\
&\text{for some}\ C>0,\ \ m\in \mathds{N} \ \text{depending on} \ \varphi.
\end{align*}
$\mathscr{H}$ is considered as a space of ``random variables". In this case, we denote $X\in \mathscr{H}$.

\begin{remark}
It is easily seen that if $\varphi_1, \varphi_2 \in C_{l,Lip}(\mathds{R_n})$, then $\varphi_1\vee \varphi_2,\  \varphi_1\wedge \varphi_2 \in C_{l,Lip}(\mathds{R_n})$ because
$\varphi_1 \vee \varphi_2 = \frac{1}{2}(\varphi_1 + \varphi_2 + |\varphi_1 - \varphi_2|)$ and $\varphi_1 \wedge \varphi_2 = \frac{1}{2}(\varphi_1 + \varphi_2 - |\varphi_1 - \varphi_2|)$. This will be very useful in our proof.
\end{remark}

\subsection{Sub-linear Expectation and Capacity}

\begin{definition}
A sub-linear expectation $\widehat{E}$ on $\mathscr{H}$ is a function $\widehat{E} : \mathscr{H}\rightarrow \overline{\mathds{R}} := [-\infty, +\infty]$ satisfying the following properties: For all $X,Y \in \mathscr{H}$, we have\\
(a) Monotonicity: If $X\geq Y$, then $\widehat{E}[X] \geq \widehat{E}[Y]$.\\
(b) Constant preserving: $\widehat{E}[c]=c$.\\
(c) Sub-additivity: $\widehat{E}[X + Y] \leq \widehat{E}[X] + \widehat{E}[Y]$ whenever $\widehat{E}[X] + \widehat{E}[Y]$ is not of the form $+\infty, -\infty$ or $-\infty, +\infty$.\\
(d) Positive homogeneity: $\widehat{E}[\lambda X]=\lambda \widehat{E}[X],\  \lambda \geq 0$.
\end{definition}

The triple $(\Omega, \mathscr{H}, \widehat{E})$ is called a sub-linear expectation space. Given a sub-linear expectation $\widehat{E}$, let us denote the conjugate expectation $\widehat{\mathcal{E}}$ of $\widehat{E}$ by
\begin{equation*}
\widehat{\mathcal{E}}[X] :=-\widehat{E}[-X],\ \ \forall \ X \in \mathscr{H}.
\end{equation*}

From the definition, it is easily shown that $\widehat{\mathcal{E}}[X]\leq \widehat{E}[X]$,\  $\widehat{E}[X + c] = \widehat{E}[X] + c$ and $\widehat{E}[X -Y]\geq \widehat{E}[X]-\widehat{E}[Y] $ for all $X, Y \in \mathscr{H}$ with $\widehat{E}[Y]$ being finite. We also call $\widehat{E}[X]$ and $\widehat{\mathcal{E}}[X]$ the upper-expectation and lower-expectation of $X$, respectively.

Next, we introduce the capacities corresponding to the sub-linear expectations.
\begin{definition}
Let $\mathcal{G}\subset \mathcal{F}.$ A function $V: \mathcal{G}\rightarrow [0, 1]$ is called a capacity if
\[
V(\emptyset)=0,\ V(\Omega)=1,\ \text{and}\  V(A)\leq V(B)\ \ \forall A\subset B,\ A,B\in \mathcal{G}.
\]
It is called to be sub-additive if $V(A\cup B)\leq V(A)+V(B)$ for all $A,\ B\in \mathcal{G}$ with $A\cup B \in \mathcal{G}.$
\end{definition}

Here we only consider the capacities generated by a sub-linear expectation. Let $(\Omega, \mathscr{H}, \widehat{E})$ be a sublinear space, and $\widehat{\mathcal{E}}$ be the conjugate expectation of $\widehat{E}$. It is natural to define the capacity of a set $A$ to be the sub-linear expectation of the indicator function $I_A$ of $A$. However, $I_A$ may be not in $\mathscr{H}$. So, we denote a pair $(\mathbb{V}, \mathcal{V})$ of capacities by
\[
\mathbb{V}(A):=\text{inf}\{\widehat{E}[\xi]:I_A\leq \xi,\ \xi \in \mathscr{H}\},\ \ \mathcal{V}(A):=1-\mathbb{V}(A^c),\ \ \forall A\in \mathcal{F},
\]
where $A^c$ is the complement set of A. Then
\begin{align*}
&\mathbb{V}(A):=\widehat{E}[I_A],\ \ \mathcal{V}(A):=\widehat{\mathcal{E}}[I_A],\ \ \text{if} \ I_A\in \mathscr{H},\\
&\widehat{E}[f]\leq \mathbb{V}(A)\leq \widehat{E}[g],\ \ \widehat{\mathcal{E}}[f]\leq \mathcal{V}(A)\leq \widehat{\mathcal{E}}[g],\ \ \text{if} \ f \leq I_A \leq g,\ \ f,g\in \mathscr{H}.
\end{align*}

It is obvious that $\mathbb{V}$ is sub-additive. But $\mathcal{V}$ and $\widehat{\mathcal{E}}$ may be not sub-additive. However, we have
\[
\mathcal{V}(A\cup B)\leq \mathcal{V}(A)+ \mathbb{V}(B)\ \
\text{and}\ \ \widehat{\mathcal{E}}[X+Y]\leq \widehat{\mathcal{E}}[X]+\widehat{E}[Y]
\]
due to the fact that $\mathbb{V}(A^c \cap B^c) =\mathbb{V}(A^c \backslash B)\geq \mathbb{V}(A^c)-\mathbb{V}(B)$ and
$\widehat{E}[-X-Y]\geq \widehat{E}[-X]-\widehat{E}[Y]$.\\

The corresponding Choquet integrals/expecations $(C_\mathbb{V}, C_\mathcal{V})$ are defined by
\[
C_V[X]=\int_{0}^{\infty}V(X\geq t)dt + \int_{-\infty}^{0}[V(X\geq t)-1]dt
\]
with $V$ being replaced by $\mathbb{V}$ and $\mathcal{V}$, respectively.

\begin{definition}
(I) A function $V: \mathcal{G}\rightarrow [0, 1]$ is called to be countably sub-additive if
\[
V(\bigcup_{n=1}^{\infty}A_n)\leq \sum_{n=1}^{\infty}V(A_n)\ \ \forall A_n\in \mathcal{F}.
\]
(II) A function $V: \mathcal{G}\rightarrow [0, 1]$ is called a continuous capacity if it satisfies:\\
(i) Continuity from below: $V(A_n)\uparrow V(A)$ if $A_n\uparrow A$, where $A_n, A\in \mathcal{F}$.\\
(ii) Continuity from above: $V(A_n)\downarrow V(A)$ if $A_n \downarrow A$, where $A_n, A\in \mathcal{F}$.
\end{definition}

\subsection{Independence and Distribution}

\begin{definition}

(\romannumeral1)(Identical distribution) Let $\textbf{X}_1$ and $\textbf{X}_2$ be two $n$-dimensional random vectors defined respectively in sub-linear expectation spaces $(\Omega_1, \mathscr{H}_1, \widehat{E}_1)$ and
$(\Omega_2, \mathscr{H}_2, \widehat{E}_2)$. They are called identically distributed, denoted by $\textbf{X}_1 \overset{d}{=}\textbf{X}_2$ if
\[
\widehat{E}_1[\varphi(\textbf{X}_1)]=\widehat{E}_2[\varphi(\textbf{X}_2)],
\ \forall\ \varphi \in C_{l,Lip}(\mathds{R_n}),
\]
whenever the sub-expectations are finite. A sequence $\{X_n; n\geq 1\}$ of random variables is said to be identically distributed if
$X_i \overset{d}{=}X_1$ for each $i \geq 1$.

\noindent(\romannumeral2)(Independence) In a sub-linear expectation space $(\Omega, \mathscr{H}, \widehat{E})$, a random vector
$\textbf{Y} = (Y_1,\ldots, Y_n)$, $Y_i \in \mathscr{H} $ is said to be independent to another random vector
$\textbf{X} = (X_1,\ldots, X_n),\ X_i \in \mathscr{H} $ under $\widehat{E}$ if for each test function $\varphi \in C_{l,Lip}(\mathds{R_m\times R_n})$ we have
\[
\widehat{E}[\varphi(\textbf{X},\textbf{Y})]
=\widehat{E}[\widehat{E}[\varphi(\textbf{x}, \textbf{Y})]|_{\textbf{x}=\textbf{X}}]
\]
whenever $\varphi(\textbf{x}):=\widehat{E}[\varphi(\textbf{x}, \textbf{Y})]< \infty$ for all $\textbf{x}$ and $\widehat{E}[|\varphi(\textbf{x})|]< \infty.$ Moreover, an independent series is a sequence of random variables $\{X_n; n\geq 1\}$ satisfying:
 $X_{i+1}$ is independent of $(X_1,\ldots,X_i)$ for each $i \geq 1$.

\end{definition}

\section{Main Result}

In this section, we give the main results. We first recall some related important inequalities and lemmas in sub-linear expectation space. Then we give a theorem about the convergence of a random series, from which we can deduce a theorem similar to Kolmogorov's three series theorem in classical probability thoery. At last, we prove the Marcinkiewicz's strong law of large numbers as an application.

\begin{lemma}\label{inequalities}
 Let $X, Y$ be real measurable random variables in sub-linear expectation space $(\Omega, \mathscr{H}, \widehat{E})$.\\
(1)\ \textbf{Holder inequality.} For $p, q>1$ with $\frac{1}{p}+\frac{1}{q}=1$, we have
\begin{equation*}
\widehat{E}|XY|\leq (\widehat{E}|X|^p)^{\frac{1}{p}}(\widehat{E}|X|^q)^{\frac{1}{q}}.
\end{equation*}

\noindent(2)\ \textbf{Chebyshev inequality.} Let $f(x)>0$ be a nondecreasing function on $\mathds{R}$. Then for any $x$,
\begin{equation*}
\mathbb{V}\left(X\geq x\right)\leq \frac{\widehat{E}\left(f(X)\right)}{f(x)}.
\end{equation*}

Let $f(x)>0$ be an even function on $\mathds{R}$ and nondecreasing on $(0,\infty)$. Then for any $x>0$,
\begin{equation*}
\mathbb{V}\left(|X|\geq x\right)\leq \frac{\widehat{E}\left(f(X)\right)}{f(x)}.
\end{equation*}

\noindent(3)\ \textbf{Jensen inequality.} Let $f(\cdot)$ be a convex function on $\mathds{R}$. Suppose that $\widehat{E}[X]$ and $\widehat{E}[f(X)]$ are finite, then
\begin{equation*}
\widehat{E}[f(X)]\geq f( \widehat{E}[X]).
\end{equation*}
\end{lemma}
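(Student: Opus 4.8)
The three inequalities are the sub-linear counterparts of classical ones, and the plan is to prove each of them by first establishing a pointwise inequality and then applying the structural properties (a)--(d) of $\widehat{E}$ together with the relation $\widehat{\mathcal{E}}\le\widehat{E}$ and the identity $\widehat{E}[Z+\lambda]=\widehat{E}[Z]+\lambda$ for constants $\lambda$.

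For the Holder inequality, I would first discard the trivial cases where one of $\widehat{E}|X|^p$, $\widehat{E}|Y|^q$ equals $0$ or $+\infty$, and then assume both are positive and finite. Setting $a=|X|/(\widehat{E}|X|^p)^{1/p}$ and $b=|Y|/(\widehat{E}|Y|^q)^{1/q}$, Young's inequality gives the pointwise bound $ab\le a^p/p+b^q/q$. Taking $\widehat{E}$ on both sides and using sub-additivity (c) and positive homogeneity (d) yields $\widehat{E}[ab]\le \tfrac1p\widehat{E}[a^p]+\tfrac1q\widehat{E}[b^q]=\tfrac1p+\tfrac1q=1$, which is the assertion once the normalization is undone (the denominators being positive constants, $\widehat{E}[ab]=\widehat{E}|XY|/((\widehat{E}|X|^p)^{1/p}(\widehat{E}|Y|^q)^{1/q})$ by (d)). The degenerate case $\widehat{E}|X|^p=0$ forces $\widehat{E}|X|=0$ via the Jensen inequality of part (3) applied to $t\mapsto t^p$, and then $\widehat{E}|XY|=0$ follows by a routine truncation of $|Y|$; I will not dwell on this standard point.

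For the Chebyshev inequality the crucial step is the pointwise domination $I_{\{X\ge x\}}\le f(X)/f(x)$: on $\{X\ge x\}$ monotonicity of $f$ gives $f(X)\ge f(x)$, so the right side is at least $1$, while on $\{X<x\}$ it is strictly positive. Since $f(X)/f(x)\in\mathscr{H}$, the definition of $\mathbb{V}$ as the infimum of $\widehat{E}[\xi]$ over dominating $\xi\in\mathscr{H}$ gives $\mathbb{V}(X\ge x)\le\widehat{E}[f(X)/f(x)]=\widehat{E}[f(X)]/f(x)$ by (d). The even-function version is proved identically, noting that $f(X)=f(|X|)\ge f(x)$ on $\{|X|\ge x\}$ because $f$ is even and nondecreasing on $(0,\infty)$.

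For the Jensen inequality I would take a supporting line of the convex function $f$ at the point $\widehat{E}[X]$, which is finite by hypothesis: there is a real number $c$ with $f(y)\ge f(\widehat{E}[X])+c(y-\widehat{E}[X])$ for every $y$, hence $f(X)\ge f(\widehat{E}[X])+c(X-\widehat{E}[X])$ pointwise. Applying monotonicity (a) and then $\widehat{E}[Z+\lambda]=\widehat{E}[Z]+\lambda$ reduces the claim to $\widehat{E}[c(X-\widehat{E}[X])]\ge0$. If $c\ge0$ this quantity equals $c(\widehat{E}[X]-\widehat{E}[X])=0$ by positive homogeneity; if $c<0$ it equals $|c|(\widehat{E}[X]+\widehat{E}[-X])=|c|(\widehat{E}[X]-\widehat{\mathcal{E}}[X])\ge0$ since $\widehat{\mathcal{E}}\le\widehat{E}$. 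I expect this last case distinction --- the only spot where the non-linearity of $\widehat{E}$ genuinely intervenes, the classical argument being the identity $E[c(X-EX)]=0$ --- to be the sole real subtlety; everything else is routine verification against the axioms.
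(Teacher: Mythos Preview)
Your arguments are correct and are the standard ones: Young's inequality plus normalisation for H\"older, pointwise domination $I_{\{X\ge x\}}\le f(X)/f(x)$ for Chebyshev, and the supporting-line characterisation of convexity together with the case split on the sign of the slope for Jensen. The only genuinely sub-linear wrinkle is indeed the case $c<0$ in Jensen, where you correctly pick up the extra term $\widehat{E}[X]-\widehat{\mathcal{E}}[X]\ge0$ instead of $0$.

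There is nothing to compare against in the paper itself: the authors do not give a proof of this lemma but simply refer to Proposition~2.1 of Chen, Wu and Li (2013). Your write-up is therefore strictly more self-contained than the paper's treatment, and the methods you use are exactly those one finds in that reference. One small caveat worth flagging (which the paper also glosses over): your Chebyshev argument tacitly assumes $f(X)\in\mathscr{H}$ so that $f(X)/f(x)$ is admissible in the definition of $\mathbb{V}$; for the applications in the paper (where $f(t)=|t|^p$ with $p\ge1$, hence $f\in C_{l,Lip}$) this is automatic, but for a completely general positive nondecreasing $f$ it is an implicit hypothesis.
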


The proof of Lemma \ref{inequalities} can be found in Proposition 2.1 in Chen \cite{Chen Z 2013}. Next, we give the ``the convergence part'' of the Borel-Cantelli lemma for a countably sub-additive capacity.

\begin{lemma}[Borel-Cantelli's Lemma]\label{Borel-Cantelli}
Let $\{A_n; n\geq 1\}$ be a sequence of events in $\mathcal{F}$. Suppose that $V$ is a countably sub-additive capacity. If\ $\sum_{n=1}^{\infty} V(A_n) < \infty$, then
\begin{equation*}
V(A_n\ i.o.)=0,\ \ \text{where}\ \{A_n\ i.o.\}=\bigcap_{n=1}^{\infty}\bigcup_{i=n}^{\infty}A_i.
\end{equation*}
\end{lemma}

\begin{proof}
By the monotonicity and countable sub-additivity of $V$, it follows that
\begin{equation*}
0\leq V(\bigcap_{n=1}^{\infty}\bigcup_{i=n}^{\infty}A_i)\leq
V(\bigcup_{i=n}^{\infty}A_i)\leq \sum_{i=n}^{\infty}V(A_i)\rightarrow 0\ \ \text{as}\ n \rightarrow \infty.
\end{equation*}
\end{proof}

The following theorem is the main result of this paper. It gives a criterion for the almost surely convergence of an infinite independent random series.

\begin{theorem}\label{THM1} Suppose that $\{X_n;n\geq1\}$ is a sequence of independent random variables in the sub-linear expectation space $(\Omega, \mathscr{H}, \widehat{E})$. If $\mathbb{V}$ is countably sub-additive, $\sum_{n=1}^{\infty}\hat{E}(X_n)$ and $\sum_{n=1}^{\infty}\widehat{\mathcal{E}}(X_n)$ both converge, and there exists $1\leq p \leq 2$, such that $\sum_{n=1}^{\infty}\hat{E}(|X_n|^p)<\infty$. Then $\sum_{n=1}^{\infty}X_n$ converges almost surely in capacity.
\end{theorem}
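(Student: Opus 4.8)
The plan is to follow the classical route to Kolmogorov's convergence theorem, the three ingredients being: recentring the $X_n$ so that their upper and lower sub-linear expectations become uniformly small, the Rosenthal maximal inequality of \cite{Zhang Rosenthal} applied to the tail blocks, and the Chebyshev inequality of Lemma \ref{inequalities} to turn the resulting moment bound into a statement about $\mathbb{V}$. \emph{Recentring.} Put $a_n=\widehat{E}(X_n)$, $b_n=\widehat{\mathcal{E}}(X_n)$ (so $b_n\le a_n$) and $\delta_n=(a_n-b_n)/2\ge 0$; since $\sum a_n$ and $\sum b_n$ converge, $\sum\delta_n<\infty$. Let $c_n:=\widehat{E}(X_n)=a_n$ and $W_n:=X_n-c_n$. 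Then $\sum c_n$ converges, so $\sum X_n$ converges if and only if $\sum W_n$ converges, and $\{W_n;n\ge1\}$ is again independent because $W_n$ is an affine (hence $C_{l,Lip}$) function of $X_n$. Moreover $\widehat{E}(W_n)=0$ and $\widehat{\mathcal{E}}(W_n)=b_n-a_n=-2\delta_n$, so $|\widehat{E}(W_n)|+|\widehat{\mathcal{E}}(W_n)|=2\delta_n$; and using $|c_n|^p=|\widehat{E}(X_n)|^p\le\widehat{E}|X_n|^p$ (Jensen/H\"older) together with $|x+y|^p\le 2^{p-1}(|x|^p+|y|^p)$, one gets $\sum_n\widehat{E}|W_n|^p\le 2^p\sum_n\widehat{E}|X_n|^p<\infty$. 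Thus it suffices to prove that $\sum W_n$ converges almost surely in capacity.

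\emph{Maximal inequality and oscillation bound.} Fix $m\ge1$. For every $n\ge m$, Rosenthal's inequality of \cite{Zhang Rosenthal} applied to the independent family $W_m,\dots,W_n$ (using $1\le p\le2$, so that no second-moment term is needed, only the correction coming from the non-zero expectations) furnishes a constant $C_p$ with
\[
\widehat{E}\Big[\max_{m\le k\le n}\Big|\sum_{j=m}^{k}W_j\Big|^{p}\Big]\ \le\ C_p\Big(\sum_{j\ge m}\widehat{E}|W_j|^{p}+\Big(2\sum_{j\ge m}\delta_j\Big)^{p}\Big)\ =:\ \beta_m ,
\]
a bound uniform in $n$ with $\beta_m\to0$ as $m\to\infty$ (tails of convergent series). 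Since $\max_{m\le k\le n}|\sum_{j=m}^{k}W_j|^{p}\in\mathscr{H}$ (a finite maximum of members of $\mathscr{H}$, by Remark 2.1), the Chebyshev inequality of Lemma \ref{inequalities} gives $\mathbb{V}\big(\max_{m\le k\le n}|\sum_{j=m}^{k}W_j|>\epsilon\big)\le\beta_m/\epsilon^p$ for all $n\ge m$; letting $n\to\infty$ in this $n$-uniform bound (the events increase to the $\sup$-event), $\mathbb{V}\big(\sup_{k\ge m}|\sum_{j=m}^{k}W_j|>\epsilon\big)\le\beta_m/\epsilon^p$. Writing $V_n=\sum_{j=1}^{n}W_j$ and $D_m(\epsilon):=\{\sup_{k,l\ge m}|V_k-V_l|>\epsilon\}$, the inequality $|V_k-V_l|\le 2\sup_{i\ge m}|V_i-V_m|$ then yields $\mathbb{V}(D_m(\epsilon))\le\beta_{m+1}(2/\epsilon)^{p}\to0$ as $m\to\infty$.

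\emph{Conclusion.} By the Cauchy criterion, $\{\,V_n\text{ does not converge}\,\}=\bigcup_{\epsilon\in\mathbb{Q},\,\epsilon>0}\bigcap_{m\ge1}D_m(\epsilon)$. For each fixed $\epsilon$ the events $D_m(\epsilon)$ decrease in $m$, so by monotonicity $\mathbb{V}\big(\bigcap_m D_m(\epsilon)\big)\le\inf_m\mathbb{V}(D_m(\epsilon))=0$; then the countable sub-additivity of $\mathbb{V}$ over the (countably many) rationals gives $\mathbb{V}(\{V_n\text{ does not converge}\})=0$, and by the recentring step $\mathbb{V}(\{\sum X_n\text{ does not converge}\})=0$, which is the assertion. (One could instead invoke Lemma \ref{Borel-Cantelli} along a dyadic subsequence, but the monotonicity argument above is more direct.)

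\emph{Main obstacle.} As the authors stress, a random variable cannot in general be centred so that both $\widehat{E}$ and $\widehat{\mathcal{E}}$ of it vanish; the recentring step shows the best one obtains is $\widehat{E}(W_n)-\widehat{\mathcal{E}}(W_n)=2\delta_n$, and the heart of the argument is that the correction term $\big(2\sum_{j\ge m}\delta_j\big)^{p}$ produced by Rosenthal's inequality still tends to $0$ — which is precisely where the convergence of $\sum a_n$ and $\sum b_n$ (hence $\sum\delta_n<\infty$) and the restriction $p\ge1$ are used; the restriction $p\le2$ is what allows Rosenthal's inequality to dispense with an uncontrollable $\sum\widehat{E}(W_j^2)$ term. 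A secondary technical point is the passage from the finite maximum to the supremum $\sup_{k\ge m}$, which has to be handled with care because $\mathbb{V}$ need not be continuous from below; the relevant feature is that the bound $\beta_m$ does not depend on $n$.
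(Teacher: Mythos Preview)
Your overall strategy is close to the paper's, but there is one genuine gap, and it is precisely the point you label ``secondary''.

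\medskip
\textbf{The gap.} From the Chebyshev bound you obtain
\[
\mathbb{V}\Big(\max_{m\le k\le n}\Big|\sum_{j=m}^{k}W_j\Big|>\epsilon\Big)\le \beta_m/\epsilon^{p}\qquad\text{for every }n\ge m,
\]
and then you write ``letting $n\to\infty$ (the events increase to the $\sup$-event)'' to conclude $\mathbb{V}\big(\sup_{k\ge m}|\cdot|>\epsilon\big)\le\beta_m/\epsilon^{p}$. That step is exactly continuity from below of $\mathbb{V}$, which is \emph{not} assumed in Theorem~\ref{THM1}: only countable sub-additivity is. An $n$-uniform bound on $\mathbb{V}(A_n)$ with $A_n\uparrow A$ does not, by itself, bound $\mathbb{V}(A)$; countable sub-additivity gives only $\mathbb{V}(A)\le\sum_n\mathbb{V}(A_n\setminus A_{n-1})$, which is useless here. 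The remark that ``$\beta_m$ does not depend on $n$'' is therefore not a resolution. Your subsequent Cauchy-set decomposition $\bigcup_{\epsilon\in\mathbb{Q}}\bigcap_m D_m(\epsilon)$ is fine, but it rests on the unproven bound for $\mathbb{V}(D_m(\epsilon))$.

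\medskip
\textbf{How the paper avoids it.} The paper never bounds $\mathbb{V}$ of an infinite-$\sup$ event. Instead it (i) shows $\{S_n\}$ is Cauchy in capacity, extracts a subsequence $\{S_{n_k}\}$ with $\sum_k\mathbb{V}(|S_{n_{k+1}}-S_{n_k}|\ge 2^{-k})<\infty$, and uses Borel--Cantelli (Lemma~\ref{Borel-Cantelli}) to get $S_{n_k}\to S$ a.s.~$\mathbb{V}$; then (ii) shows $\sum_k\mathbb{V}\big(\max_{n_k<j\le n_{k+1}}|S_{n_{k+1}}-S_j|\ge\epsilon\big)<\infty$ and applies Borel--Cantelli again to fill in between the $n_k$. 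Both steps involve only \emph{finite} maxima inside $\mathbb{V}$ and a \emph{summable} series of capacities, so countable sub-additivity suffices throughout. This is exactly the ``dyadic subsequence'' route you mention in your parenthetical and dismiss as less direct; in fact it is the step that makes the proof work under the stated hypotheses.

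\medskip
\textbf{On the maximal inequality.} A smaller point: the two-sided inequality you invoke, with the correction term $\big(2\sum_{j\ge m}\delta_j\big)^{p}$, is not the form quoted in this paper (Lemma~\ref{Rosnethal} is one-sided: it bounds $\widehat{E}\,|\max_k(S_n-S_k)|^{p}$ under $\widehat{E}[X_k]\le0$, not $\widehat{E}\,\max_k|S_k|^{p}$). The paper therefore splits $|S_m-S_n|$ into $(S_m-S_n)^{+}$ and $(S_m-S_n)^{-}$ via Lemma~\ref{positve}, centres once by $\widehat{E}(X_j)$ for the plus part and once by $\widehat{\mathcal{E}}(X_j)$ for the minus part, and applies Lemma~\ref{Rosnethal} to each. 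Your single-shot version with the $\delta$-correction does exist in \cite{Zhang Rosenthal}, so this is a legitimate shortcut; just be explicit about which statement from Zhang you are using, since it is stronger than what the present paper records.
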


Before we pove the theorem, we need some moment inequalities for ${\mathop{\max }_{k \le {n}}}\, S_k$ which can be applied to truncated random variables freely. Here, we refer to the Rosnethal's inequality in Zhang \cite{Zhang Rosenthal}.

\begin{lemma}[Rosnethal's inequality]\label{Rosnethal}
Let $\{X_n;n\geq1\}$ be a sequence of independent random variables in the sub-linear expectation space $(\Omega, \mathscr{H}, \widehat{E})$, and denote $S_k = X_1 + \ldots + X_k, \ S_0 = 0$. If $\widehat{E}[X_k]\le 0,\ k=1,\ 2,\ \ldots\ ,$  then
\begin{equation*}
\widehat{E}\left(\left| \underset{k \le {n}}{\mathop{\max }}\, {(S_n-S_k)} \right|^p \right)
\leq 2^{2-p} \sum_{k=1}^{n}\widehat{E}\left (|X_k|^p \right),\ \ \ \  for \  1\leq p \leq 2.
\end{equation*}
\end{lemma}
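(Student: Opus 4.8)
The plan is to turn the maximal inequality into a one-step recursion by peeling off the last summand $X_n$, and then to close the recursion with a single scalar inequality. Set $M_n=\max_{0\le k\le n}(S_n-S_k)$; since the range includes $k=n$, giving $S_n-S_n=0$, one has $M_n\ge 0$ and hence $|M_n|^p=M_n^p$. First I would record the backward recursion
\[
M_n=\max\{0,\ X_n+\max_{0\le k\le n-1}(S_{n-1}-S_k)\}=(X_n+M_{n-1})^+,\qquad M_0=0,
\]
which follows from $S_n-S_k=X_n+(S_{n-1}-S_k)$ for $k\le n-1$. The value of writing it this way is structural: $M_{n-1}$ is a nonnegative, continuous, piecewise-linear (hence $C_{l,Lip}$, by the remark that $\vee$ and $\wedge$ preserve $C_{l,Lip}$) function of $(X_1,\dots,X_{n-1})$, while $X_n$ is independent of $(X_1,\dots,X_{n-1})$ by the definition of an independent series.

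The analytic heart of the argument is the elementary inequality: for every $m\ge 0$, every real $x$, and every $1\le p\le 2$,
\[
((m+x)^+)^p\le m^p+p\,m^{p-1}x+2^{2-p}|x|^p .
\]
I would prove this by scaling to $m=1$ (the case $m=0$ being immediate) and checking the regions $x\ge 0$, $-1\le x<0$, and $x<-1$ separately: the function $G(x)=1+px+2^{2-p}|x|^p-((1+x)^+)^p$ satisfies $G(0)=G'(0)=0$ and is convex on $(0,\infty)$, while on the left it reduces to the scalar fact $1-p+2^{2-p}\ge 0$ for $p\in[1,2]$. The endpoint $p=2$ is an identity since $2^{2-p}=1$, and $p=1$ is verified directly. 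The presence of the cross term $p\,m^{p-1}x$ is exactly what makes the hypothesis $\widehat{E}[X_n]\le 0$ usable.

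With these two ingredients the recursion closes. Holding $m=M_{n-1}\ge 0$ fixed and applying $\widehat{E}$ to the scalar inequality with $x=X_n$, I would use constant translation for the $m^p$ term, sub-additivity for the rest, and positive homogeneity together with $m^{p-1}\ge 0$ to get $\widehat{E}[p\,m^{p-1}X_n]=p\,m^{p-1}\widehat{E}[X_n]\le 0$; this yields $\Psi(m):=\widehat{E}[((m+X_n)^+)^p]\le m^p+2^{2-p}\widehat{E}[|X_n|^p]$ for all $m\ge 0$. Independence of $X_n$ from the past then gives, through the tower identity in the definition of independence, $\widehat{E}[M_n^p]=\widehat{E}[\Psi(M_{n-1})]\le\widehat{E}[M_{n-1}^p]+2^{2-p}\widehat{E}[|X_n|^p]$ by monotonicity, and iterating from $M_0=0$ produces $\widehat{E}[M_n^p]\le 2^{2-p}\sum_{k=1}^{n}\widehat{E}[|X_k|^p]$. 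The hardest part will be the sharp scalar inequality with the precise constant $2^{2-p}$, where the behaviour as $x\to -1^-$ and the non-smoothness at $p=1$ need care; a secondary obstacle is bookkeeping the non-additivity, since the cross term may only be discarded because its coefficient $p\,m^{p-1}$ is nonnegative, and one must confirm that every composite map lies in $C_{l,Lip}(\mathbb{R}^n)$ before invoking the independence identity.
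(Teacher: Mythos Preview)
The paper does not supply its own proof of this lemma; it simply quotes the result from Zhang \cite{Zhang Rosenthal}. Your proposal is therefore not a comparison target but a genuine self-contained argument, and it is correct in substance.

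Your backward recursion $M_n=(X_n+M_{n-1})^+$ together with the pointwise inequality $((m+x)^+)^p\le m^p+pm^{p-1}x+2^{2-p}|x|^p$ is exactly the right mechanism: the cross term is where the hypothesis $\widehat{E}[X_k]\le 0$ enters, positive homogeneity lets you pull the nonnegative coefficient $pm^{p-1}$ out, and the independence tower identity closes the recursion. One small point of exposition: your sketch of the scalar inequality handles $x\ge 0$ by convexity and $x\le -1$ via $1-p+2^{2-p}\ge 0$, but the intermediate range $-1<x<0$ is not literally covered by ``$G$ convex on $(0,\infty)$''. There $G$ is convex only on $[-2/3,0]$ and concave on $[-1,-2/3]$; combining $G(0)=G'(0)=0$ on the convex piece with $G(-1)\ge 0$, $G(-2/3)\ge 0$ on the concave piece finishes it. This is a presentational gap rather than a mathematical one. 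The bookkeeping you flag (that $M_{n-1}$ and $((m+\cdot)^+)^p$ lie in $C_{l,Lip}$ so that the independence identity is applicable) is routine but worth stating, since the framework only lets you take $\widehat{E}$ of elements of $\mathscr{H}$.
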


If both the upper expectation $\widehat{E}[X_k]$ and the lower expectation $\widehat{\mathcal{E}}[X_k]$ are zeros, then
\begin{equation*}
\widehat{E}\left(\max_{k\le n}|S_k|^p \right)\le
2 \sum_{k=1}^{n}\widehat{E}\left (|X_k|^p \right),\ \ \ \  for \  1\leq p \leq 2.
\end{equation*}
The proof of the convergence of the series $\sum_{n=1}^{\infty}X_n$ is then standard and very similar to that for random variables in a classical probablility space. Howerver, it is impossible to centralize a random variable such that both its upper expectation  and the lower expectation are zero. We shall consider $(S_n-S_k)^+$ and $(S_n-S_k)^-$ respectively.

\begin{lemma}\label{positve}
 Suppose that $x \text{and}\  y \in \mathds{R}$, then
\begin{equation*}
(x+y)^+\leq x^{+}+|y|\ \ \text{and}\ \ (x+y)^-\leq x^{-}+|y|.
\end{equation*}
\end{lemma}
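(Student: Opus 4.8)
The plan is to reduce both inequalities to a single elementary fact: the positive-part map $t \mapsto t^{+} = t \vee 0$ is subadditive on $\mathds{R}$. First I would verify that $(a+b)^{+} \le a^{+} + b^{+}$ for all $a,b \in \mathds{R}$ by a two-case check: if $a+b \le 0$ the left-hand side is $0$ and the right-hand side is nonnegative; if $a+b > 0$ then $(a+b)^{+} = a+b \le a^{+} + b^{+}$ since $a \le a^{+}$ and $b \le b^{+}$. No additivity of $\widehat{E}$ or of the capacities $\mathbb{V}, \mathcal{V}$ is involved here — this is a purely pointwise statement about real numbers — so nothing is delicate at this step.

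Given subadditivity, the first inequality is immediate: $(x+y)^{+} \le x^{+} + y^{+} \le x^{+} + |y|$, using the trivial bound $y^{+} \le |y|$. For the second inequality I would exploit the identity $t^{-} = (-t)^{+}$, which gives $(x+y)^{-} = \bigl(-(x+y)\bigr)^{+} = \bigl((-x)+(-y)\bigr)^{+} \le (-x)^{+} + (-y)^{+} = x^{-} + y^{-} \le x^{-} + |y|$. Alternatively one can simply rerun the two-case argument directly on $(x+y)^{-}$; either route is a couple of lines.

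I do not expect any genuine obstacle: the lemma is a pure real-variable inequality. Its only real content is conceptual — it is the substitute, in the sub-linear setting, for the usual ``split the partial sum into its centered parts'' step that is unavailable because one cannot force both $\widehat{E}[S_n]$ and $\widehat{\mathcal{E}}[S_n]$ to vanish simultaneously. Keeping $x^{+}$ (resp.\ $x^{-}$) as the main term and absorbing the perturbation into the additive error $|y|$ is exactly the form in which it will be applied to $(S_n - S_k)^{\pm}$ in tandem with the Rosenthal-type bound of Lemma \ref{Rosnethal} when proving Theorem \ref{THM1}.
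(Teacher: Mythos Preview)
Your proof is correct and essentially identical to the paper's: the paper also does a two-case check on the sign of $x+y$ to bound $(x+y)^{+}$ by $x^{+}+|y|$, and then derives the second inequality from the first via $(x+y)^{-}=(-x-y)^{+}$. The only cosmetic difference is that you isolate subadditivity of $t\mapsto t^{+}$ as a standalone step before applying $y^{+}\le |y|$, while the paper folds both into the case analysis.
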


\begin{proof}
As for $(x+y)^+$, considering the following two cases:\\
(\romannumeral1) If $x+y\leq 0$, then $(x+y)^+ = 0 \leq x^{+}+|y|$.\\
(\romannumeral2) Suppose $x+y> 0$, then $(x+y)^+ = x+y \leq x^{+} + y^{+}\leq x^{+} + |y|$.

As for $(x+y)^-$, using the first inequality $(x+y)^+\leq x^{+}+|y|$ we have proved, we can get $(x+y)^- =(-x-y)^+\leq (-x)^{+}+|-y|= x^{-}+|y|$.
\end{proof}

\begin{proof}[Proof of Theorem \ref{THM1}]
Define $S_n = \sum_{i=1}^{n}X_i$.

\textbf{Step 1:} Prove $\{S_n;{n \geq 1}\}$ is a Cauchy sequence in capacity and there exists a subsequence $\{S_{n_k};{k\geq 1}\}$ converges to some $S,\ a.s.\ \mathbb{V}$.

By the Chebyshev inequality in Lemma \ref{inequalities} with an even function $f(x)=|x|^p$, $1\leq p\leq2$ and the Cr-inequality $|x+y|^p\leq 2^{p-1}(|x|^p+|y|^p)$ , it is easy to show that for any $\epsilon > 0$, we have
\begin{align*}
\mathbb{V}\left\{|S_m-S_n|\geq\epsilon\right\}
&\leq \frac{1}{\epsilon^p}\widehat{E}\left(|S_m-S_n|^p\right)
= \frac{1}{\epsilon^p}\widehat{E}\left(\left|(S_m-S_n)^{+}-(S_m-S_n)^{-}\right|^p\right) \\
&\leq\frac{2^{p-1}}{\epsilon^p}\widehat{E}\left(\left|(S_m-S_n)^{+}\right|^p+
\left|(S_m-S_n)^{-}\right|^p\right)\\
&\leq\frac{2^{p-1}}{\epsilon^p}\left\{\widehat{E}\left(\left|(S_m-S_n)^{+}\right|^p\right)+
\widehat{E}\left(\left|(S_m-S_n)^{-}\right|^p\right)\right\}.
\end{align*}

 Define $T_n = \sum_{i=1}^{n}\widehat{E}(X_i)$, $Y_j=X_j-\widehat{E}(X_j)$ and $\widetilde{S}_n = \sum_{j=1}^{n}Y_j$. It is straightforward to show that $\widehat{E}(Y_j)=0$ and $\widetilde{S}_{n}=S_{n}-T_{n}$. According to Lemma \ref{positve}, we have
\begin{align*}
\widehat{E}\left(\left|(S_m-S_n)^{+}\right|^p\right)
&=\widehat{E}\left(\left|\left(\widetilde{S}_m+T_m-(\widetilde{S}_n+T_n)\right)^{+}\right|^p\right)\\
&=\widehat{E}\left(\left|\left(\widetilde{S}_m-\widetilde{S}_n +T_m-T_n\right)^{+}\right|^p\right)\\
&\leq \widehat{E}\left(\left|(\widetilde{S}_m-\widetilde{S}_n)^{+}+|T_m - T_n|\right|^p\right)\\
&\leq2^{p-1}\widehat{E}\left(\left|(\widetilde{S}_m-\widetilde{S}_n)^{+}\right|^p\right)+
2^{p-1}\left|T_m - T_n\right|^p.
\end{align*}

 Because of the fact that $\{Y_n; n\geq 1\}$ is independent, from Lemma \ref{Rosnethal}, we have
\begin{align*}
\widehat{E}\left(\left|(\widetilde{S}_m-\widetilde{S}_n)^{+}\right|^p\right)
&\leq \widehat{E}\left(\underset{n\leq k \leq {m}}{\mathop{\max
}}\,\left|(\widetilde{S}_m-\widetilde{S}_k)^{+}\right|^p\right)
= \widehat{E}\left(\left|\underset{n\leq k \leq {m}}{\mathop{\max
}}\,(\widetilde{S}_m-\widetilde{S}_k)^{+}\right|^p\right)\\
&\leq \widehat{E}\left(\left|\underset{n\leq k \leq {m}}{\mathop{\max
}}\,(\widetilde{S}_m-\widetilde{S}_k)\right|^p\right)
\leq 2^{2-p}\sum_{j=n+1}^{m}\widehat{E}(|Y_j|^p).
\end{align*}

Note that
\begin{align*}
\widehat{E}(|Y_j|^p)&=
\widehat{E}\left (|X_j-\widehat{E}(X_j)|^p\right)\\
&\leq2^{p-1}\widehat{E}\left(|X_j|^p+|\widehat{E}(X_j)|^p\right)\\
&= 2^{p-1}\widehat{E}\left(|X_j|^p\right)+2^{p-1}|\widehat{E}(X_j)|^p\\
&\leq 2^{p}\widehat{E}(|X_j|^p).
\end{align*}
Here the last inequality is due to $|\widehat{E}(X_j)|^p\leq \widehat{E}\left(|X_j|^p\right) $, which can be derived from Lemma \ref{inequalities} (Jensen inequality) with the convex function $f(\cdot)=|x|^p, 1 \leq p \leq 2$. Then
\begin{align*}
\widehat{E}\left(\left|(\widetilde{S}_m-\widetilde{S}_n)^{+}\right|^p\right)
\leq 4\sum_{j=n+1}^{m}\widehat{E}(|X_j|^p)\rightarrow 0\ \ \ as\ \  m\geq n \rightarrow\infty.
\end{align*}

Meanwhile, note that $T_n = \sum_{j=1}^{n}\widehat{E}(X_j)$ is the partial sum of $\widehat{E}(X_j)$ and $\{T_n;{n\geq 1}\}$ is a convergent sequence based on the conditions in theorem \ref{THM1}. It must be a Cauchy sequence in $\mathds{R}$. Then $|T_m-T_n|\rightarrow 0$ as $m\geq n \rightarrow\infty.$ Therefore,
\begin{align*}
\widehat{E}\left(\left|(S_m-S_n)^{+}\right|^p\right)
\leq2^{p-1}\widehat{E}\left(\left|(\widetilde{S}_m-\widetilde{S}_n)^{+}\right|^p\right)+
2^{p-1}\left|T_m - T_n\right|^p \rightarrow 0\ \ \ as\ \  m\geq n \rightarrow\infty.
\end{align*}

Similarly, considering the sequence $\{-X_n;{n\geq 1}\}$, it is easy to show that
\begin{align*}
\widehat{E}\left(\left|(S_m-S_n)^{-}\right|^p\right) \rightarrow 0\ \ \ as\ \  m\geq n \rightarrow\infty.
\end{align*}
So, for any $\epsilon > 0$, as $m\geq n \rightarrow\infty $,
\begin{equation*}
\mathbb{V}\left\{|S_m-S_n|\geq\varepsilon\right\}\leq\frac{2^{p-1}}{\epsilon^p}\left\{\widehat{E}\left(\left|(S_m-S_n)^{+}\right|^p\right)+
\widehat{E}\left(\left|(S_m-S_n)^{-}\right|^p\right)\right\}\rightarrow 0.
\end{equation*}

Thus, $\{S_n; n \geq 1\}$ is a Cauchy sequence in capacity. Now we consider the subsequence $\{S_{n_k};k\geq 1\}$ satisfying
\begin{equation*}
\mathbb{V}\{|S_{n_{k+1}}-S_{n_k}|\geq \frac{1}{2^k}\}\leq \frac{1}{2^k}.
\end{equation*}

Take the set $A_k:=\{|S_{n_{k+1}}-S_{n_k}|\geq \frac{1}{2^k}\}$. Since $\mathbb{V}$ is countably sub-additive, we have
\begin{equation*}
\mathbb{V}\left( \limsup A_n\right)= \mathbb{V}\left(\bigcap_{n=1}^{\infty}\bigcup_{k=n}^{\infty}A_{k}\right)
\leq \mathbb{V}\left( \bigcup_{k=m}^{\infty}A_k\right)\leq \sum_{k=m}^{\infty}\mathbb{V}(A_k)\leq \sum_{k=m}^{\infty}\frac{1}{2^k}
\leq \frac{1}{2^{m-1}}
\end{equation*}

\noindent for each $m\geq 1.$ Letting $m\rightarrow \infty$ yields
\begin{equation*}
\mathbb{V}\left( \limsup A_n\right)= 0.
\end{equation*}

Let $\Omega_0=(\limsup A_n)^c=\bigcup_{n=1}^{\infty}\bigcap_{k=n}^{\infty}A_{k}^{c}$. It follows from the above equation that $\mathbb{V}(\Omega_0^c)=0$. For each $\omega\in \Omega_0$, there exists an integer $N(\omega)$, such that $|S_{n_{k+1}}(\omega)-S_{n_k}(\omega)|< \frac{1}{2^k}$ for any $k\geq N(\omega)$. We have $\sum_{k=1}^{\infty}|S_{n_{k+1}}(\omega)-S_{n_k}(\omega)|\leq 1 <\infty$. Then, $\{S_{n_k}(\omega);k\geq 1\}$ converges.  And $\{S_{n_k};k\geq 1\}$ converges to some $S,\ a.s.\ \mathbb{V}$, i.e.,
\begin{equation}
S_{n_k} \rightarrow S \ \ a.s.\ \mathbb{V}.
\end{equation}

\textbf{Step 2:} Prove $\underset{{{n}_{k}}<j\le {{n}_{k+1}}}{\mathop{\max }}\, \left| S_{n_{k+1}}-S_{j} \right| \rightarrow 0\ \ a.s.\ \mathbb{V}.$

According to Lemma \ref{Rosnethal} and recall the Chebyshev ineuqality, we have
\begin{align*}
 &\sum\limits_{k=1}^{\infty }{\mathbb{V}\left( \underset{{{n}_{k}}<j\le {{n}_{k+1}}}{\mathop{\max }}\, \left( S_{n_{k+1}}-T_{n_{k+1}}-(S_{j}-T_{j}) \right)^{+} \ge \epsilon  \right)} \\
 = &\sum\limits_{k=1}^{\infty }{\mathbb{V}\left( \underset{{{n}_{k}}<j\le {{n}_{k+1}}}{\mathop{\max }}\, \left( \widetilde{S}_{n_{k+1}}-\widetilde{S}_{j} \right)^{+} \ge \epsilon  \right)} \\
 = &\sum\limits_{k=1}^{\infty }{\mathbb{V}\left(\left| \underset{{{n}_{k}}<j\le {{n}_{k+1}}}{\mathop{\max }}\, \left( \widetilde{S}_{n_{k+1}}-\widetilde{S}_{j} \right)^{+} \right|\ge \epsilon \right)} \\
 \le & \sum\limits_{k=1}^{\infty }{\frac{\hat{E}\left(\left| \underset{{{n}_{k}}<j\le {{n}_{k+1}}}{\mathop{\max }}\, { (\widetilde{S}_{n_{k+1}}-\widetilde{S}_{j})^{+} }\right|^p \right)}{{{\epsilon }^{p}}}} \\
 \le & \sum\limits_{k=1}^{\infty }{\frac{\hat{E}\left(\left| \underset{{{n}_{k}}<j\le {{n}_{k+1}}}{\mathop{\max }}\, { (\widetilde{S}_{n_{k+1}}-\widetilde{S}_{j}) }\right|^p \right)}{{{\epsilon }^{p}}}} \\
 \leq & \frac{1}{\epsilon^p} \sum_{k=1}^{\infty} 2^{2-p} \sum_{j=n_k+1}^{n_{k+1}}\widehat{E}\left (|Y_j|^p \right) \\
 = &\frac{2^{2-p}}{\epsilon^p} \sum_{j=1}^{\infty} \widehat{E}\left (|Y_j|^p \right)
 \leq \frac{2^{2-p}}{\epsilon^p}\sum_{j=1}^{\infty} 2^p \widehat{E}(|X_j|^p) < \infty.
\end{align*}

Using the Borel-Cantelli's lemma, we obtain that
\begin{align}
\underset{{{n}_{k}}<j\le {{n}_{k+1}}}{\mathop{\max }}\, \left( S_{n_{k+1}}-T_{n_{k+1}}-(S_{j}-T_{j}) \right)^{+} \rightarrow 0\ \ a.s.\ \mathbb{V} \ \ \  as\ k \rightarrow \infty.
\end{align}

Note that $\{T_n;n\geq 1\}$ is a Cauchy sequence in $\textbf{R}$. That is, for any $\epsilon > 0$, there exists an integer $N$, such that for any $m, n>N$, $|T_m-T_n|< \epsilon$. So,
\begin{align}
& \underset{{{n}_{k}}<j\le {{n}_{k+1}}}{\mathop{\max }}\, \left| {T_{n_{k+1}}-T_j} \right|\rightarrow 0\ \ \ \  as\ k \rightarrow \infty.
\end{align}

From Lemma \ref{positve}, we know that
\begin{align*}
\underset{{{n}_{k}}<j\le {{n}_{k+1}}}{\mathop{\max }}\,\left( S_{n_{k+1}}-S_{j} \right)^{+}
&=\underset{{{n}_{k}}<j\le {{n}_{k+1}}}{\mathop{\max }}\,\left( S_{n_{k+1}}-S_{j}-(T_{n_{k+1}}-T_{j})+(T_{n_{k+1}}-T_{j}) \right)^{+}\\
&\leq \underset{{{n}_{k}}<j\le {{n}_{k+1}}}{\mathop{\max }}\,\left\{\left( S_{n_{k+1}}-S_{j}-(T_{n_{k+1}}-T_{j})\right)^{+}+|T_{n_{k+1}}-T_{j}|\right\}\\
&\leq\underset{{{n}_{k}}<j\le {{n}_{k+1}}}{\mathop{\max }}\,\left( S_{n_{k+1}}-T_{n_{k+1}}-(S_{j}-T_{j}) \right)^{+}+\underset{{{n}_{k}}<j\le {{n}_{k+1}}}{\mathop{\max }}\,|T_{n_{k+1}}-T_{j}|
\end{align*}

Combining (2) and (3), we can get
\begin{align*}
\underset{{{n}_{k}}<j\le {{n}_{k+1}}}{\mathop{\max }}\, \left( S_{n_{k+1}}-S_{j} \right)^{+} \rightarrow 0\ \ a.s.\ \mathbb{V}\ \ \ \ as\  k \rightarrow \infty.
\end{align*}

Similarly, considering the sequence $\{-X_n;{n \geq 1}\}$. Let $Y_j=-X_j+\widehat{\mathcal{E}}(X_j)$, $T_n = \sum_{j=1}^{n}\widehat{\mathcal{E}}(X_j)$ and $\widetilde{S}_n = \sum_{j=1}^{n}Y_j$. Then $\widehat{E}(Y_j)=0$ and $\widetilde{S}_{n}=T_{n}-S_{n}$. Using the same method as before, we have
\begin{align*}
\underset{{{n}_{k}}<j\le {{n}_{k+1}}}{\mathop{\max }}\, \left( T_{n_{k+1}}-S_{n_{k+1}}-(T_{j}-S_{j}) \right)^{+} \rightarrow 0\ \ a.s.\ \mathbb{V},
\end{align*}
which is equivalent to
\begin{align*}
\underset{{{n}_{k}}<j\le {{n}_{k+1}}}{\mathop{\max }}\, \left( S_{n_{k+1}}-T_{n_{k+1}}-(S_{j}-T_{j}) \right)^{-} \rightarrow 0\ \ a.s.\ \mathbb{V}.
\end{align*}

Using $(x+y)^-\leq x^{-}+|y|$ in Lemma \ref{positve}, we have
\begin{align*}
\underset{{{n}_{k}}<j\le {{n}_{k+1}}}{\mathop{\max }}\, \left( S_{n_{k+1}}-S_{j} \right)^{-} \rightarrow 0\ \ a.s.\ \mathbb{V}\ \ \ \ as\  k \rightarrow \infty.
\end{align*}

Thus,
\begin{align}
\underset{{{n}_{k}}<j\le {{n}_{k+1}}}{\mathop{\max }}\, \left| S_{n_{k+1}}-S_{j} \right| \rightarrow 0\ \ a.s.\ \mathbb{V}.
\end{align}

Hence, combining (1) and (4), we have
\begin{equation*}
S_{n} \rightarrow S \ \ a.s.\ \mathbb{V}.
\end{equation*}
\end{proof}

Next, for any random variable X and constant c, define $X^c=(-c)\vee(X\wedge c)$.  The following corollary follows from theorem \ref{THM1} immediately.

\begin{corollary}\label{three series} Let $\{X_n;n\geq1\}$ be a sequence of independent random variables in $(\Omega, \mathscr{H}, \widehat{E})$ and $\mathbb{V}$ is countably sub-additive, then $\sum_{n=1}^{\infty}X_n$ will converge almost surely in capacity if the following three conditions hold for some $c\in (0,+\infty)$.\\
$\begin{aligned}
&(\romannumeral1)\ \sum_{n=1}^{\infty}\mathbb{V}(|X_n|>c)<\infty. \\
&(\romannumeral2)\ \sum_{n=1}^{\infty}\hat{E}(X_n^c) \ \text{and} \ \sum_{n=1}^{\infty}\widehat{\mathcal{E}}(X_n^c)\  \text{both converge}. \\
&(\romannumeral3)\ \sum_{n=1}^{\infty}\hat{E}(|X_n^c|^q)<\infty\ \text{for some} \ 1\leq q \leq 2.
\end{aligned}$
\end{corollary}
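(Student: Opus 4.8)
The plan is to deduce Corollary \ref{three series} from Theorem \ref{THM1} by applying the latter to the truncated variables $X_n^c = (-c)\vee(X_n\wedge c)$. First I would observe that $X_n^c = \psi_c(X_n)$ where $\psi_c(x) = (-c)\vee(x\wedge c)$ is a bounded Lipschitz function, hence $\psi_c \in C_{l,Lip}(\mathds{R})$; by the definition of independence and the fact that composing independent variables with $C_{l,Lip}$ functions coordinatewise preserves independence, the sequence $\{X_n^c; n\geq 1\}$ is again an independent sequence in $(\Omega, \mathscr{H}, \widehat{E})$. Conditions (ii) and (iii) of the corollary are then precisely the hypotheses of Theorem \ref{THM1} applied to $\{X_n^c\}$ (with the exponent $p = q$), so Theorem \ref{THM1} gives $\sum_{n=1}^{\infty} X_n^c$ converges $a.s.\ \mathbb{V}$.

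Next I would pass from $\sum X_n^c$ to $\sum X_n$ using condition (i). Set $A_n = \{|X_n| > c\}$; since $\sum_{n=1}^\infty \mathbb{V}(A_n) < \infty$ and $\mathbb{V}$ is countably sub-additive, the Borel--Cantelli lemma (Lemma \ref{Borel-Cantelli}) yields $\mathbb{V}(A_n\ i.o.) = 0$. Thus, outside a $\mathbb{V}$-null set, there is $N(\omega)$ with $|X_n(\omega)| \le c$, hence $X_n^c(\omega) = X_n(\omega)$, for all $n \ge N(\omega)$. Therefore, off the union of this null set and the null set on which $\sum X_n^c$ diverges --- a union that is still $\mathbb{V}$-null by countable sub-additivity --- the two series $\sum X_n(\omega)$ and $\sum X_n^c(\omega)$ differ only in finitely many terms, so $\sum_{n=1}^\infty X_n(\omega)$ converges. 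This gives $\sum_{n=1}^\infty X_n$ converges $a.s.\ \mathbb{V}$.

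The only genuinely delicate point is the claim that $\{X_n^c\}$ inherits independence; this is where the non-additive setting requires a little care, but it follows directly from the stated definition of an independent sequence since $X_{i+1}^c = \psi_c(X_{i+1})$ is a $C_{l,Lip}$ function of $X_{i+1}$, and $(X_1^c,\ldots,X_i^c) = \Psi(X_1,\ldots,X_i)$ for the $C_{l,Lip}$ map $\Psi = (\psi_c,\ldots,\psi_c)$, so any test function of $(X_1^c,\ldots,X_i^c, X_{i+1}^c)$ is a test function of $(X_1,\ldots,X_i,X_{i+1})$ and the factorization property transfers. Everything else is a routine combination of Borel--Cantelli and the ``modify finitely many terms'' argument, exactly as in the classical proof, with countable sub-additivity of $\mathbb{V}$ standing in for countable additivity when we take unions of null sets.
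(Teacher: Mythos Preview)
Your proposal is correct and follows essentially the same route as the paper: apply Theorem \ref{THM1} to the truncated sequence $\{X_n^c\}$ using conditions (ii)--(iii), then use condition (i) together with Borel--Cantelli (Lemma \ref{Borel-Cantelli}) to conclude that $X_n = X_n^c$ eventually a.s.\ $\mathbb{V}$, so the two series have the same convergence behaviour off a $\mathbb{V}$-null set. Your write-up is in fact more careful than the paper's, which silently assumes the independence of $\{X_n^c\}$ that you take the trouble to justify via composition with the $C_{l,Lip}$ map $\psi_c$.
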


\begin{proof}
By condition (\romannumeral1), $\sum_{n=1}^{\infty}\mathbb{V}(X_n\neq X_n^c)=\sum_{n=1}^{\infty}\mathbb{V}(|X_n|>c)<\infty$. By the Borel-Cantelli's lemma, we have $\mathbb{V}(X_n\neq X_n^c, i.o.)=0$. So, except a null set $\Omega_0$, $\sum_{n=1}^{\infty}X_n$ and $\sum_{n=1}^{\infty}X_n^c$ both converge or both diverge. According to Theorem \ref{THM1} and conditions (\romannumeral2), (\romannumeral3), $\sum_{n=1}^{\infty}X_n^c$ converges almost surely in capacity. Hence $\sum_{n=1}^{\infty}X_n$ converges almost surely in capacity.
\end{proof}

Corollary \ref{three series} is very similar to Kolmogorov's three series theorem in classical probability thoery. It gives a criterion for the almost sure convergence of an infinite series of random variables in terms of the convergence of three different series. Combined with Kronecker's lemma, it can be used to give a relatively easy proof of the Marcinkiewicz's strong law of large numbers.

\begin{theorem}\label{Marcinkiewicz}
Suppose $\{X_n;n\geq1\}$ is a sequence of independent and identical random variable in the sub-linear expectation space $(\Omega, \mathscr{H}, \widehat{E})$ and $\mathbb{V}$ is countably sub-additive. For $0<p<1$, if\ $C_\mathbb{V}(|X_1|^p)<\infty$, then $S_n/n^{1/p}\rightarrow 0\ a.s.\ \mathbb{V}.$
And for $1<p<2$, suppose $\hat{E}(X_n)=\hat{\mathcal{E}}(X_n)=\mu$ and $\lim_{a\rightarrow \infty}\widehat{E}\left((|X_1|-a)^+\right)=0$, if\ $C_\mathbb{V}(|X_1|^p)<\infty$, then $(S_n-n\mu)/n^{1/p}\rightarrow 0$  a.s. $\mathbb{V}$.
\end{theorem}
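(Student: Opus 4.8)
The plan is to prove the two cases of the Marcinkiewicz SLLN by reducing, in each case, the almost sure convergence of a normalized sum to the convergence of a truncated random series, which is exactly what Corollary \ref{three series} delivers. The standard classical route is to use Kronecker's lemma: if $\sum_{n=1}^{\infty} Y_n/b_n$ converges a.s. for a sequence $0<b_n\uparrow\infty$, then $b_n^{-1}\sum_{k=1}^n Y_k\to 0$ a.s. Here I would take $b_n=n^{1/p}$ and $Y_n=X_n$ (case $0<p<1$) or $Y_n=X_n-\mu$ (case $1<p<2$); this lemma only uses arithmetic on real sequences, so it transfers verbatim to the a.s.\ $\mathbb{V}$ setting once we have the series convergence on a set of full capacity. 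The real work is therefore to verify the three conditions of Corollary \ref{three series} for the truncated variables $Y_n^{b_n}$ (truncation level growing with $n$), using only the hypothesis $C_\mathbb{V}(|X_1|^p)<\infty$ and the identical-distribution assumption.

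\textbf{Case $0<p<1$.} Set $b_n=n^{1/p}$ and work with $X_n^{b_n}=(-b_n)\vee(X_n\wedge b_n)$. For condition (i) I would use the Chebyshev-type bound $\mathbb{V}(|X_n|>b_n)=\mathbb{V}(|X_1|^p>n)$ and sum over $n$, comparing the series $\sum_n \mathbb{V}(|X_1|^p>n)$ with the Choquet integral $C_\mathbb{V}(|X_1|^p)=\int_0^\infty \mathbb{V}(|X_1|^p>t)\,dt$, which is finite by hypothesis. For condition (iii) with $q=1$ one estimates $\sum_n b_n^{-1}\widehat{E}(|X_n^{b_n}|)$; writing $\widehat{E}(|X_1|\wedge b_n)=\int_0^{b_n}\mathbb{V}(|X_1|>t)\,dt$ and exchanging the order of summation and integration (valid since everything is nonnegative), this reduces to a Euler--Maclaurin-style comparison showing $\sum_n b_n^{-1}\widehat{E}(|X_1|\wedge b_n)\le C\,C_\mathbb{V}(|X_1|^p)<\infty$, which is the usual computation $\sum_{n\ge 1} n^{-1/p}\int_0^{n^{1/p}}\mathbb{V}(|X_1|>t)\,dt<\infty$. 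Condition (ii) is then immediate from (iii) since $|\widehat{E}(X_n^{b_n})|\le\widehat{E}(|X_n^{b_n}|)$ and likewise for the lower expectation, so both $\sum_n\widehat{E}(X_n^{b_n})$ and $\sum_n\widehat{\mathcal{E}}(X_n^{b_n})$ converge absolutely. Corollary \ref{three series} then gives a.s.\ $\mathbb{V}$ convergence of $\sum_n X_n^{b_n}/b_n$, and Kronecker finishes the case.

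\textbf{Case $1<p<2$.} Now $b_n=n^{1/p}$, $q$ is any fixed exponent in $(1,2)$ with $q<p$ would be cleanest, and I would center: consider $(X_n-\mu)$ and truncate at level $b_n$. Condition (i) goes through exactly as before since $\mathbb{V}(|X_n-\mu|>b_n)$ is controlled by $\mathbb{V}(|X_1|>c\,n^{1/p})$ up to constants, hence summable against $C_\mathbb{V}(|X_1|^p)$. For condition (iii), estimate $\sum_n b_n^{-q}\widehat{E}(|(X_n-\mu)^{b_n}|^q)$ by $\sum_n b_n^{-q}\widehat{E}(|X_1-\mu|^q\wedge b_n^q)$ and again pass to the Choquet representation to compare with $C_\mathbb{V}(|X_1|^p)$; here the inequality $q<p$ makes $\sum_n n^{-q/p}\int_0^{n^{1/p}} t^{q-1}\mathbb{V}(|X_1|>t)\,dt<\infty$ work. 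Condition (ii) is the delicate one and is where the hypotheses $\widehat{E}(X_n)=\widehat{\mathcal{E}}(X_n)=\mu$ and $\lim_{a\to\infty}\widehat{E}((|X_1|-a)^+)=0$ enter: one must show $\sum_n b_n^{-1}\widehat{E}((X_n-\mu)^{b_n})$ converges, and the difference between $\widehat{E}((X_n-\mu)^{b_n})$ and $\widehat{E}(X_n-\mu)=0$ is bounded by the tail quantity $\widehat{E}((|X_1-\mu|-b_n)^+)$, which tends to $0$ and is in fact summable against the Choquet integral after the same rearrangement. The lower-expectation series is handled symmetrically using $\widehat{\mathcal{E}}(X_n-\mu)=0$ and the mixed subadditivity $\widehat{\mathcal{E}}[X+Y]\le\widehat{\mathcal{E}}[X]+\widehat{E}[Y]$ recorded in Section 2.

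\textbf{Main obstacle.} The hard part is condition (ii) in the case $1<p<2$: because the sub-linear expectation is not additive, one cannot simply write $\widehat{E}((X_n-\mu)^{b_n})=\widehat{E}(X_n-\mu)-\widehat{E}((X_n-\mu)-(X_n-\mu)^{b_n})$ and must instead control the truncation error from both sides, invoking the two-sided exactness $\widehat{E}(X_n)=\widehat{\mathcal{E}}(X_n)=\mu$ together with the uniform-integrability-type hypothesis $\lim_{a\to\infty}\widehat{E}((|X_1|-a)^+)=0$ to get a summable bound; keeping the upper and lower expectation estimates consistent through the nonadditivity (and through the Choquet-integral rearrangements, which require care about which direction each inequality points) is the technical core. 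The case $0<p<1$ has no centering issue and is comparatively routine.
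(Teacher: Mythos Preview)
Your approach is essentially the paper's: reduce via Kronecker's lemma to the a.s.\ convergence of $\sum_n X_n/n^{1/p}$ (or its centered version), then verify the three conditions of Corollary~\ref{three series} for the truncated variables at level $c=1$, each verification being a Choquet-integral rearrangement against $C_{\mathbb V}(|X_1|^p)$. The structure, the use of identical distribution, and the treatment of condition~(ii) in the case $1<p<2$ via the bound $|\widehat E(Y_n)-\widehat E(X_n)|\le \widehat E\big((|X_1|-n^{1/p})^+\big)$ all match the paper.

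Two points of comparison and one slip. First, for condition~(iii) the paper takes $q=2$ in both ranges of $p$, packaging the estimate as Lemma~\ref{square converge}; your choice $q=1$ when $0<p<1$ also works and has the pleasant side effect that condition~(ii) then follows immediately from (iii), whereas the paper re-does essentially the same computation for (ii). Second, and more importantly, in the case $1<p<2$ you wrote that one should take $q\in(1,2)$ with $q<p$; this is the wrong direction. The Fubini/rearrangement step requires $\sum_{n\ge t^p} n^{-q/p}\asymp t^{p-q}$ to converge, i.e.\ $q/p>1$, so you need $q>p$ (the paper's $q=2$ is the simplest choice). With $q<p$ the tail sum $\sum_{n\ge t^p} n^{-q/p}$ diverges and the bound on $\sum_n n^{-q/p}\int_0^{n^{1/p}} t^{q-1}\mathbb V(|X_1|>t)\,dt$ fails. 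This is a sign error in the inequality rather than a structural gap; once you swap $q<p$ for $q>p$ the argument goes through exactly as you outlined.
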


\begin{remark} If $\mathbb{V}$ is continuous and for some $\mu$, $(S_n-n\mu)/n^{1/p}\rightarrow 0$  a.s. $\mathbb{V}$, then $(X_n-\mu)/n^{1/p}\rightarrow 0$  a.s. $\mathbb{V}$. With the same arguments in the proof of Theorem 3.3 (b) of Zhang \cite{Zhang Rosenthal}, we can derive  that $C_{\mathbb{V}}(|X_1|^p)<\infty$.

\end{remark}

Before we prove the Theorem \ref{Marcinkiewicz}, we give two lemmas first. One is the well-known Kronecker Lemma.
\begin{lemma}[Kronecker] \label{Kronecker}
Suppose $\{x_n\}_{n=1}^{\infty}$ and $\{a_n\}_{n=1}^{\infty}$ are two infinite sequences of real numbers, $0<a_n\uparrow \infty$, if $\sum_{n=1}^{\infty}x_n/a_n$ converges, then $\sum_{i=1}^{n}x_i/a_n \rightarrow 0.$
\end{lemma}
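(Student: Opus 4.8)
The plan is to reduce the statement to a summation-by-parts (Abel transform) identity followed by a Toeplitz-type averaging argument, since this is the classical deterministic Kronecker lemma and involves no sub-linear structure whatsoever. First I would introduce the partial sums of the convergent series: set $b_0 = 0$ and $b_n = \sum_{k=1}^{n} x_k/a_k$, so that by hypothesis $b_n \to b$ for some finite $b \in \mathds{R}$. The elementary but crucial observation is that $x_k = a_k(b_k - b_{k-1})$, which lets me rewrite the target partial sum $\sum_{i=1}^{n} x_i$ entirely in terms of the $a_k$ and the convergent sequence $(b_k)$.

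Next I would apply summation by parts. A direct computation gives
\begin{equation*}
\sum_{i=1}^{n} x_i = \sum_{k=1}^{n} a_k(b_k - b_{k-1}) = a_n b_n - \sum_{k=1}^{n-1}(a_{k+1}-a_k)b_k,
\end{equation*}
where the boundary term $a_1 b_0$ vanishes because $b_0 = 0$. Dividing through by $a_n > 0$ then yields
\begin{equation*}
\frac{1}{a_n}\sum_{i=1}^{n} x_i = b_n - \frac{1}{a_n}\sum_{k=1}^{n-1}(a_{k+1}-a_k)b_k.
\end{equation*}
Since $b_n \to b$, it suffices to prove that the weighted average on the right also converges to $b$; the conclusion then follows by taking the difference.

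The remaining and only genuinely delicate step is this averaging argument. Here I would exploit that the weights $w_{n,k} := (a_{k+1}-a_k)/a_n$ are nonnegative (because $a_n$ is nondecreasing), satisfy $\sum_{k=1}^{n-1} w_{n,k} = (a_n - a_1)/a_n \to 1$, and each tend to $0$ as $n \to \infty$ (because $a_n \uparrow \infty$) — precisely the hypotheses of the Toeplitz lemma. Concretely, given $\epsilon > 0$ I would fix an index $N$ with $|b_k - b| < \epsilon$ for all $k \geq N$ and split $\sum_{k=1}^{n-1}(a_{k+1}-a_k)(b_k - b)$ at $N$: the tail is bounded by $\epsilon\,(a_n - a_N)/a_n \leq \epsilon$, while the fixed finite head $\sum_{k=1}^{N-1}(a_{k+1}-a_k)|b_k - b|$ is annihilated by the factor $1/a_n \to 0$. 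This forces $\frac{1}{a_n}\sum_{k=1}^{n-1}(a_{k+1}-a_k)b_k \to b$, and hence $\frac{1}{a_n}\sum_{i=1}^{n} x_i \to b - b = 0$, as claimed.

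The main obstacle is entirely concentrated in the last paragraph — making the Ces\`aro/Toeplitz averaging rigorous through the head-tail split and the use of $a_n \to \infty$ — whereas the Abel transform and the algebra are routine. I would also take care to verify the sign convention in the summation-by-parts identity, which is where a slip is most likely to occur.
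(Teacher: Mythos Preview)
Your argument is correct and is the standard Abel-summation/Toeplitz proof of the classical Kronecker lemma. Note, however, that the paper does not actually prove this lemma: it merely states it as ``the well-known Kronecker Lemma'' and uses it as a tool in the proof of Theorem~\ref{Marcinkiewicz}. So there is no approach in the paper to compare against; your write-up simply supplies the omitted classical proof, and it does so accurately.
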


\begin{lemma}\label{square converge} Let X be a random variable defined on a sublinear expectation space $(\Omega, \mathscr{H}, \widehat{E})$. Suppose $C_\mathbb{V}(|X|^p)<\infty,\ 0<p<2$, then
\begin{align*}
\sum_{n=1}^{\infty}\frac{\hat{E}\left(\left(|{X}|\wedge n^{1/p}\right)^2\right)}{n^{2/p}}<\infty.
\end{align*}
\end{lemma}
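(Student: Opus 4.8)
The plan is to control $\widehat{E}\big((|X|\wedge n^{1/p})^2\big)$ by a Choquet-type integral against $\mathbb{V}$ and then run the classical change-of-variables summation. Write $Z_n:=(|X|\wedge n^{1/p})^2=|X|^2\wedge n^{2/p}$; this belongs to $\mathscr{H}$ and satisfies $0\le Z_n\le n^{2/p}$. The first and key step is to prove
\[
\widehat{E}(Z_n)\ \le\ \int_0^{n^{2/p}}\mathbb{V}\big(|X|^2> t\big)\,dt .
\]
To do this, fix $N\ge1$, set $h=n^{2/p}/N$, and note that pointwise $Z_n\le\sum_{k=1}^{N}h\,I_{\{Z_n>(k-1)h\}}$. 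For each $k$ and each $\varepsilon>0$ choose $\xi_k\in\mathscr{H}$ with $I_{\{Z_n>(k-1)h\}}\le\xi_k$ and $\widehat{E}(\xi_k)\le\mathbb{V}(Z_n>(k-1)h)+\varepsilon$, which is possible by the defining infimum of $\mathbb{V}$. Monotonicity, sub-additivity and positive homogeneity of $\widehat{E}$ give $\widehat{E}(Z_n)\le\sum_{k=1}^N h\,\widehat{E}(\xi_k)\le\sum_{k=1}^N h\,\mathbb{V}(Z_n>(k-1)h)+\varepsilon\,n^{2/p}$; letting $\varepsilon\downarrow0$ leaves $\widehat{E}(Z_n)\le\sum_{k=1}^N h\,\mathbb{V}(Z_n>(k-1)h)$. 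Since $t\mapsto\mathbb{V}(Z_n>t)$ is non-increasing and bounded by $1$, these left Riemann sums converge, as $N\to\infty$, to $\int_0^{n^{2/p}}\mathbb{V}(Z_n>t)\,dt$, and $\{Z_n>t\}=\{|X|^2>t\}$ for $t<n^{2/p}$, which yields the displayed bound.

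Next, the substitution $t=u^{2/p}$, together with $\{|X|^2>u^{2/p}\}=\{|X|^p>u\}$, turns the right-hand side into $\tfrac{2}{p}\int_0^n\mathbb{V}(|X|^p>u)\,u^{2/p-1}\,du$. Hence
\[
\sum_{n=1}^{\infty}\frac{\widehat{E}\big((|X|\wedge n^{1/p})^2\big)}{n^{2/p}}\ \le\ \frac{2}{p}\sum_{n=1}^{\infty}\frac{1}{n^{2/p}}\int_0^n\mathbb{V}(|X|^p>u)\,u^{2/p-1}\,du ,
\]
and by Tonelli this equals $\tfrac{2}{p}\int_0^{\infty}\mathbb{V}(|X|^p>u)\,u^{2/p-1}\big(\sum_{n\ge\max(1,\lceil u\rceil)}n^{-2/p}\big)\,du$. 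Here I use $2/p>1$, which holds since $p<2$. For $0<u\le1$ the inner sum is $\zeta(2/p)<\infty$ and $\int_0^1 u^{2/p-1}\,du=p/2$, so (using $\mathbb{V}\le1$) the contribution of $u\in(0,1]$ is finite. For $u>1$ an elementary estimate gives $\sum_{n\ge\lceil u\rceil}n^{-2/p}\le C\,u^{1-2/p}$ for a finite constant $C$ depending only on $p$, so the factors $u^{2/p-1}$ and $u^{1-2/p}$ cancel and the contribution of $u>1$ is at most $C\int_1^{\infty}\mathbb{V}(|X|^p>u)\,du\le C\,C_{\mathbb{V}}(|X|^p)<\infty$. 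Adding the two pieces proves the lemma.

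The main obstacle is the first step. Because $\widehat{E}$ is not additive and the indicators $I_A$ need not lie in $\mathscr{H}$, one cannot simply integrate the layer-cake identity $Z_n=\int_0^\infty I_{\{Z_n>t\}}\,dt$ term by term; instead one must run the finite Riemann-sum approximation, invoke the defining infimum of $\mathbb{V}$ at each level, and only then pass to the limit in $N$. Once $\widehat{E}(Z_n)$ is dominated by $\int_0^{n^{2/p}}\mathbb{V}(|X|^2>t)\,dt$, the remainder is the routine change-of-variables-and-Tonelli computation familiar from the classical proof of the Marcinkiewicz law.
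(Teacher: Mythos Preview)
Your proof is correct and follows essentially the same route as the paper's: bound $\widehat{E}\big((|X|\wedge n^{1/p})^2\big)$ by the Choquet integral $\int_0^{n^{2/p}}\mathbb{V}(|X|^2>t)\,dt$, then change variables and swap sum and integral (Tonelli) to reduce everything to $C_{\mathbb{V}}(|X|^p)$. The only noteworthy difference is that the paper quotes the inequality $\widehat{E}\le C_{\mathbb{V}}$ from Zhang's Lemma~3.9(b), whereas you re-derive it by the Riemann-sum/infimum argument, which makes your write-up more self-contained; the paper also substitutes to the variable $y=t^{1/2}$ instead of your $u=t^{p/2}$, but this is purely cosmetic.
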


\begin{proof}
Since $\left(|{X}|\wedge n^{1/p}\right)^2\leq n^{2/p}$, we have $\lim_{a\rightarrow \infty}\widehat{E}\left(\left(|{X}|\wedge n^{1/p}\right)^2\wedge a\right)=\widehat{E}\left(\left(|{X}|\wedge n^{1/p}\right)^2\right)$.\\
According to the Lemma 3.9(b) in Zhang \cite{Zhang Rosenthal}, we have
\begin{align*}
\widehat{E}\left(\left(|{X}|\wedge n^{1/p}\right)^2\right)&\leq C_\mathbb{V}\left(\left(|{X}|\wedge n^{1/p}\right)^2\right)=\int_{0}^{+\infty}\mathbb{V}\left(\left(|{X}|\wedge n^{1/p}\right)^2\geq x\right)dx\\
&=\int_{0}^{n^{2/p}}\mathbb{V}\left(\left(|{X}|\wedge n^{1/p}\right)^2\geq x\right)dx
\leq \int_{0}^{n^{2/p}}\mathbb{V}\left(X^2\geq x\right)dx \\
&=\int_{0}^{n^{2/p}}\mathbb{V}\left(|X|\geq x^{1/2}\right)dx
=\int_{0}^{n^{1/p}}2y\mathbb{V}\left(|X|\geq y\right)dy.
\end{align*}

Therefore,
\begin{align*}
\sum_{n=1}^{\infty}\frac{\hat{E}\left(\left(|{X}|\wedge n^{1/p}\right)^2\right)}{n^{2/p}}
&\leq \sum_{n=1}^{\infty} n^{-2/p}\int_{0}^{n^{1/p}}2y\mathbb{V}\left(|X|\geq y\right)dy\\
&= \sum_{n=1}^{\infty} \int_{n}^{n+1}n^{-2/p}dt\int_{0}^{n^{1/p}}2y\mathbb{V}\left(|X|\geq y\right)dy\\
&\leq c_p\int_{1}^{+\infty}t^{-2/p}\int_{0}^{t^{1/p}}2y\mathbb{V}\left(|X|\geq y\right)dydt\\
&= c_p\int_{0}^{+\infty}2y\mathbb{V}\left(|X|\geq y\right)\int_{y^p}^{+\infty}t^{-2/p}dt dy\\
&= c_p\int_{0}^{+\infty}2y\mathbb{V}\left(|X|\geq y\right)\frac{p}{2-p}y^{p-2} dy\\
&= \frac{2p\cdot c_p}{2-p}\int_{0}^{+\infty}y^{p-1}\mathbb{V}\left(|X|\geq y\right) dy\\
&= \frac{2\cdot c_p}{2-p}\int_{0}^{+\infty}\mathbb{V}\left(|X|^p\geq x\right) dx
\leq \frac{2\cdot c_p}{2-p}C_\mathbb{V}(|X|^p)<\infty,
\end{align*}
where $c_p$ is a constant only depends on $p.$
\end{proof}

\begin{proof}[Proof of Theorem \ref{Marcinkiewicz}]
 Without loss of generality, suppose $\mu=0$.

According to the Kronecker's lemma (Lemma \ref{Kronecker}), in order to prove $S_n/n^{1/p}\rightarrow 0 \ a.s.\ \mathbb{V}$, i.e., $\sum_{i=1}^nX_i/n^{1/p}\rightarrow 0 \ a.s.\ \mathbb{V},$ we only need to prove that $\sum_{n=1}^{\infty}X_n/n^{1/p}$ converges almost surely in capacity. In the following contents, we set the constant $c=1$.

\textbf{Step 1:} Prove $\sum_{n=1}^{\infty}\mathbb{V}\left(\left|{X_n}/{n^{1/p}}\right|>c\right)<\infty.$
\begin{align*}
\sum_{n=1}^{\infty}\mathbb{V}\left(\left|{X_n}/{n^{1/p}}\right|>c\right)&=
\sum_{n=1}^{\infty}\mathbb{V}(|X_n|>n^{1/p})=\sum_{n=1}^{\infty}\int_{n-1}^{n}\mathbb{V}(|X_1|>n^{1/p})dt\\
&\leq \int_{0}^{\infty}\mathbb{V}(|X_1|>t^{1/p})dt=C_\mathbb{V}(|X_1|^p)<\infty.
\end{align*}

\textbf{Step 2:} Prove $\sum_{n=1}^{\infty}\hat{E}\left((X_n/n^{1/p})^c \right)$ and $ \sum_{n=1}^{\infty}\hat{\mathcal{E}}\left((X_n/n^{1/p})^c \right)$ both converge.\\

Define $Y_n =(-n^{1/p})\vee(X_n\wedge n^{1/p})$, then $\left({X_n}/{n^{1/p}}\right)^c= {Y_n}/{n^{1/p}}$.

\vspace{2mm}
(i) For $0<p<1$, we have $\lim_{a\rightarrow \infty}\widehat{E}\left(|Y_n|\wedge a\right)=\widehat{E}\left(|Y_n|\right)$ thanks to the fact that $Y_n$ is less than or equal to $n^{1/p}$. Recall the Lemma 3.9(b) in Zhang \cite{Zhang Rosenthal}, we have
\begin{align*}
\widehat{E}\left(|Y_n|\right)&\leq C_\mathbb{V}\left(|Y_n|\right)=\int_{0}^{+\infty}\mathbb{V}\left(|Y_n|\geq x\right)dx
=\int_{0}^{n^{1/p}}\mathbb{V}\left(|Y_n|\geq x\right)dx \\
&\leq \int_{0}^{n^{1/p}}\mathbb{V}\left(|X_n|\geq x\right)dx
=\int_{0}^{n}\frac{1}{p}y^{1/p-1}\mathbb{V}\left(|X_n|^p\geq y\right)dy.
\end{align*}

It follows that
\begin{align*}
\sum_{n=1}^{\infty}\left|\hat{E}\left(\frac{X_n}{n^{1/p}}\right)^c\right|
&= \sum_{n=1}^{\infty}\frac{\left|\hat{E}({Y_n})\right|}{n^{1/p}}
\leq \sum_{n=1}^{\infty}\frac{\hat{E}(\left|{Y_n}\right|)}{n^{1/p}}\\
&\leq \frac{1}{p}\sum_{n=1}^{\infty}{n^{-1/p}}\int_{0}^{n}y^{1/p-1}\mathbb{V}\left(|X_n|^p\geq y\right)dy\\
&\leq \frac{1}{p}\sum_{n=1}^{\infty}\int_{n}^{n+1}{n^{-1/p}}dt\int_{0}^{n}y^{1/p-1}\mathbb{V}\left(|X_n|^p\geq y\right)dy\\
&\leq \frac{c_p}{p}\int_{1}^{+\infty}t^{-1/p}\int_{0}^{t}y^{1/p-1}\mathbb{V}\left(|X_n|^p\geq y\right)dy dt\\
&= \frac{c_p}{p}\int_{0}^{+\infty}y^{1/p-1}\mathbb{V}\left(|X_n|^p\geq y\right)\int_{y}^{+\infty}t^{-1/p}dt dy\\
&= \frac{c_p}{p}\int_{0}^{+\infty}y^{1/p-1}\mathbb{V}\left(|X_n|^p\geq y\right)\frac{p}{1-p}{y^{1-1/p}} dy\\
&= \frac{c_p}{1-p}\int_{0}^{+\infty}\mathbb{V}\left(|X_1|^p\geq y\right) dy=\frac{c_p}{1-p}C_\mathbb{V}(|X_1|^p)<\infty.
\end{align*}

Therefore, $\sum_{n=1}^{\infty}\hat{E}\left(\frac{X_n}{n^{1/p}}\right)^c$ converges.
On the other hand, considering the sequence $\{-X_n;n\geq 1\}$, we can easily get $\sum_{n=1}^{\infty}\widehat{\mathcal{E}}\left(\frac{X_n}{n^{1/p}}\right)^c$ converges.

(ii) For $1<p<2$, since $\lim_{a\rightarrow \infty}\widehat{E}\left((|X_n|-a)^+\right)=0$, recall the Lemma 3.9(b) in Zhang \cite{Zhang Rosenthal}, we have that for any $a>0$,
\begin{align*}
\widehat{E}\left((|X_n|-a)^+\right)\leq C_\mathbb{V}\left((|X_n|-a)^+\right)=\int_{0}^{+\infty}\mathbb{V}\left(|X_n|-a\geq x\right)dx
=\int_{a}^{+\infty}\mathbb{V}\left(|X_n|\geq x\right)dx.
\end{align*}

Using the fact that $\hat{E}(X_n)=0$, we find
\begin{align*}
\left| \widehat{E}(Y_n) \right|
&=\left| \widehat{E}(Y_n)-\widehat{E}(X_n) \right|
\leq \widehat{E}\left(|Y_n-X_n |\right)
\leq \widehat{E}\left((X_n-n^{1/p})^+\right)\\
&\leq \int_{n^{1/p}}^{+\infty}\mathbb{V}\left(|X_n|\geq x\right)dx
= \int_{n}^{+\infty}\frac{1}{p}y^{1/p-1}\mathbb{V}\left(|X_n|^p\geq y\right)dy.
\end{align*}

Then
\begin{align*}
\sum_{n=1}^{\infty}\left|\hat{E}\left(\frac{X_n}{n^{1/p}}\right)^c\right|
&= \sum_{n=1}^{\infty}\frac{\left|\hat{E}({Y_n})\right|}{n^{1/p}}
\leq \frac{1}{p}\sum_{n=1}^{\infty}{n^{-1/p}}\int_{n}^{+\infty}y^{1/p-1}\mathbb{V}\left(|X_n|^p\geq y\right)dy\\
&\leq \frac{1}{p}\sum_{n=1}^{\infty}\int_{n-1}^{n}t^{-1/p}dt\int_{n}^{+\infty}y^{1/p-1}\mathbb{V}\left(|X_n|^p\geq y\right)dy\\
&\leq \frac{1}{p}\int_{0}^{+\infty}t^{-1/p}\int_{t}^{+\infty}y^{1/p-1}\mathbb{V}\left(|X_n|^p\geq y\right)dy dt\\
&= \frac{1}{p}\int_{0}^{+\infty}y^{1/p-1}\mathbb{V}\left(|X_n|^p\geq y\right)\int_{0}^{y}t^{-1/p}dt dy\\
&= \frac{1}{p}\int_{0}^{+\infty}y^{1/p-1}\mathbb{V}\left(|X_n|^p\geq y\right)\frac{p}{p-1}{y^{1-1/p}} dy\\
&= \frac{1}{p-1}\int_{0}^{+\infty}\mathbb{V}\left(|X_1|^p\geq y\right) dy=\frac{1}{p-1}C_\mathbb{V}(|X_1|^p)<\infty.
\end{align*}

It follows that $\sum_{n=1}^{\infty}\hat{E}\left(\frac{X_n}{n^{1/p}}\right)^c$ converges.
On the other hand, since $\hat{E}(-X_n)=-\widehat{\mathcal{E}}(X_n)=0$,\ $\widehat{\mathcal{E}}(-X_n)=-\hat{E}(X_n)=0$, considering the sequence $\{-X_n;n\geq 1\}$. By the same way, we can easily get $\sum_{n=1}^{\infty}\widehat{\mathcal{E}}\left(\frac{X_n}{n^{1/p}}\right)^c$ converges.

\textbf{Step 3:} Prove $\sum_{n=1}^{\infty}\hat{E}(|(X_n/n^{1/p})^c|^2)<\infty.$\\

It follows from Lemma \ref{square converge} that
\begin{align*}
\sum_{n=1}^{\infty}\hat{E}\left(\left|\left(\frac{X_n}{n^{1/p}}\right)^c\right|^2\right)=\sum_{n=1}^{\infty}\frac{\hat{E}\left(\left(|{X_1}|\wedge n^{1/p}\right)^2\right)}{n^{2/p}}<\infty.
\end{align*}

Above all, according to Corollary \ref{three series}, $S_n/n^{1/p}\rightarrow 0 \ a.s.\ \mathbb{V}.$
\end{proof}

\section{Discussion}

In this paper, we have established a three series theorem, which gives a criterion for the almost sure convergence of an infinite independent series of random variables. We apply it to give a simple proof of the Marcinkiewicz's strong law of large numbers when $0<p<1$ and $1<p<2$ under sub-linear expectation. However, we have not give the proof of the strong law of large numbers ($p=1$). The main difficulty lies in that we can not testify the convergence of $\sum_{n=1}^{\infty}\widehat{\mathcal{E}}\left[(X_n/n)^c\right]$. I think one possible reason is that the condition may be too strong since Kolmogorov's three series theorem gives a necessary and sufficient condition of the convergence of a random series in the frame of classical probability theory, but ours only gives a sufficient condition. Another possibility is that we don't have too many tools, especially inequalities to prove the convergence of $\sum_{n=1}^{\infty}\widehat{\mathcal{E}}\left[(X_n/n)^c\right]$.

\section*{Acknowledgments}
 This work was supported by grants from the NSF of China (No. 1173101), the 973 Program (No. 2015CB352302),  Zhejiang Provincial Natural Science Foundation (No. LY17A010016) and the Fundamental Research Funds for the Central Universities.


\end{document}